\newtheorem{theorem}{Theorem}[section]
\newtheorem{proposition}[theorem]{Proposition}
\theoremstyle{definition}
\newtheorem{definition}[theorem]{Definition}
\newtheorem{remark}[theorem]{Remark}
\numberwithin{equation}{section}
\newcommand\N {{\mathbb N}} 
\newcommand\R {{\mathbb R}}
\newcommand\Q {{\mathbb Q}}
\newcommand\st{{\textbf{st}}} 
\newcommand{\hr} {{{}^{\mathfrak{h}}\hspace*{-0.4pt}\R}}
\newcommand\astr{{{}^{\ast}\hspace{-0.5pt}\R}}
\newcommand\astq{{{}^{\ast}\Q}}
\newcommand\astf{{{}^{\ast}\hspace*{-2.3pt}F}}
\newcommand\astff{{}^\ast\hskip-2.9pt f}
\newcommand\asta{{{}^\ast\hspace{-3.5pt}A}}
\newcommand\astn{{}^{\ast}\hspace{-1pt}\N}
\newcommand\astp{{{}^{\ast}\hspace{-0.5pt}\mathbb{P}}}
\newcommand{\ha}{\mathfrak{h}}
\newcommand\astu{{}^{\ast}\hspace{-1.14pt}U}
\newcommand\astx{{}^{\ast}\hspace{-3pt}X}
\newcommand\asts{{}^{\ast}\hspace{-1.7pt}S}
\author[P. B\l aszczyk]{Piotr B\l{}aszczyk}\address{P. B\l{}aszczyk,
Institute of Mathematics, Pedagogical University of Cracow,
Poland}\email{pb@up.krakow.pl}
\author [V. Kanovei] {Vladimir Kanovei} \address{V. Kanovei, IPPI,
Moscow, and MIIT, Moscow, Russia}\email{kanovei@googlemail.com}
\author [M. Katz] {Mikhail G. Katz}\address{M. Katz, Department of
Mathematics, Bar Ilan University, Ramat Gan 52900
Israel}\email{katzmik@macs.biu.ac.il}
\author[T. Nowik]{Tahl Nowik}\address{T. Nowik, Department of
Mathematics, Bar Ilan University, Ramat Gan 52900
Israel}\email{tahl@math.biu.ac.il}
\subjclass[2000]{Primary 26A06; Secondary 26A48, 26E35, 40-99}
\begin{document}

\thispagestyle{empty}


\title{Monotone subsequence via ultrapower}

\begin{abstract}
An ultraproduct can be a helpful organizing principle in presenting
solutions of problems at many levels, as argued by Terence Tao.  We
apply it here to the solution of a calculus problem: every infinite
sequence has a monotone infinite subsequence, and give other
applications.

Keywords: ordered structures; monotone subsequence; ultrapower;
saturation; compactness
\end{abstract}

\maketitle 


\section{Introduction}

Solutions to even elementary calculus problems can be tricky but in
many cases, enriching the foundational framework available enables one
to streamline arguments, yielding proofs that are more natural than
the traditionally presented ones.

We explore various proofs of the elementary fact that every infinite
sequence has a monotone infinite subsequence, including some that
proceed without choosing a convergent one first.

An ultraproduct can be a helpful organizing principle in presenting
solutions of problems at many levels, as argued by Terence Tao in
\cite{Ta14}.  We apply it here to the solution of the problem
mentioned above.  A related but \emph{different} problem of proving
that every infinite totally ordered set contains a monotone sequence
is treated by Hirshfeld in \cite[Exercise\;1.2, p.\;222]{Hi88}.  We
first present the ultrapower construction in Section~\ref{s3}.
Readers familiar with ultraproducts can skip ahead to the proof in
Section~\ref{s2}.

\section{Ultrapower construction}
\label{s3}

Let us outline a construction (called an \emph{ultrapower}) of a
hyperreal extension~$\R\hookrightarrow\astr$ exploited in our solution
in Section~\ref{s2}.  Let~$\R^{\N}$ denote the ring of sequences of
real numbers, with arithmetic operations defined termwise.  Then we
have a totally ordered field~$\astr=\R^{\N}\!/\text{MAX}$ where
``MAX'' is a suitable maximal ideal.  Elements of~$\astr$ are called
hyperreal numbers.  Note the formal analogy between the quotient
$\astr=\R^{\N}\!/\text{MAX}$ and the construction of the real numbers
as equivalence classes of Cauchy sequences of rational numbers.  In
both cases, the subfield is embedded in the superfield by means of
constant sequences, and the ring of sequences is factored by a
\emph{maximal ideal}.  

We now describe a construction of such a maximal ideal
$\text{MAX}\subseteq\R^\N$ exploiting a suitable finitely additive
measure~$\xi\colon\mathcal{P}(\N)\to\{0,1\}~$ (thus~$\xi$ takes only
two values,~$0$ and~$1$) taking the value~$1$ on each cofinite set,%
\footnote{For each pair of complementary \emph{infinite} subsets
of~$\N$, such a measure~$\xi$ ``decides'' in a coherent way which one is
``negligible'' (i.e., of measure~$0$) and which is ``dominant''
(measure~$1$).}
where~$\mathcal{P}(\N)$ is the set of subsets of~$\N$.  The ideal MAX
consists of all ``negligible'' sequences~$\langle u_n\rangle$, i.e.,
sequences which vanish for a set of indices of full measure~$\xi$,
namely, $\xi\big(\{n\in\N\colon u_n=0\}\big)=1$.  The
subset~$\mathcal{U}=\mathcal{U}_\xi\subseteq\mathcal{P}(\N)$
consisting of sets of full measure~$\xi$ is called a free ultrafilter
(these can be shown to exist using Zorn's lemma).  A similar
construction applied to~$\Q$ produces the field~$\astq$ of
hyperrational numbers.  The construction can also be applied to a
general ordered set~$F$ to obtain an ultrapower extension denoted
$\astf=F^{\N}\!/\mathcal{U}$.

\begin{definition}
The \emph{order} on the field~$\astf$ is defined by setting
\[
[\langle u_n\rangle]<[\langle v_n\rangle] \text{\; if and only if \;}
\xi(\{n\in\N\colon u_n<v_n\})=1
\]
or equivalently~$\{n\in\N\colon u_n<v_n\}\in\mathcal{U}$.
\end{definition}

In particular, every element $x\in F$ is canonically identified with
the class $[\langle x\rangle]$ of the constant sequence $\langle
x\rangle$ with general term $x$.  Then $x\in\astf$ satisfies~$x<v$ if
and only if~$\{n\in\N\colon x<v_n\}\in\mathcal{U}$.

\section{Solution}
\label{s2}

Let~$F$ be an ordered field.  We are mainly interested in the
cases~$F=\Q$ and~$F=\R$ though the arguments go through in greater
generality for an arbitrary totally ordered set.

\begin{theorem}
A sequence~$\langle u_n\rangle$ of elements of~$F$ necessarily
contains a subsequence~$\left\langle u_{n_k}\right\rangle$ such that
either~$u_{n_k}\geq u_{n_\ell}$ whenever~$k>\ell$, or~$u_{n_k}\leq
u_{n_\ell}$ whenever~$k>\ell$.
\end{theorem}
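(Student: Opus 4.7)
The plan is to lift the sequence into the ultrapower and let the ultrafilter $\mathcal{U}$ make a decisive choice for us. Setting $U=[\langle u_n\rangle]\in\astf$, each standard $u_n\in F$ lies in $\astf$ via its constant sequence, so the comparison $u_n\leq U$ makes sense and, by the definition of the order on $\astf$, is equivalent to $\{m\in\N\colon u_n\leq u_m\}\in\mathcal{U}$. Put
\[
S=\{n\in\N\colon u_n\leq U\}.
\]
Since $\mathcal{U}$ is an ultrafilter, exactly one of $S$ or $\N\setminus S$ belongs to $\mathcal{U}$, and this single binary decision drives the construction of a non-decreasing or a strictly decreasing subsequence respectively.

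Suppose first that $S\in\mathcal{U}$, and build a non-decreasing subsequence by induction. Choose any $n_1\in S$. Given $n_1<\cdots<n_k$ in $S$ with $u_{n_1}\leq\cdots\leq u_{n_k}$, observe that the three sets
\[
S,\qquad \{m\in\N\colon u_m\geq u_{n_k}\},\qquad \{m\in\N\colon m>n_k\}
\]
all lie in $\mathcal{U}$: the first by assumption, the second because $n_k\in S$ unpacks to exactly this membership, and the third because the free ultrafilter $\mathcal{U}$ contains every cofinite set. Their intersection therefore lies in $\mathcal{U}$ and in particular is nonempty; any element $n_{k+1}$ of it simultaneously satisfies $n_{k+1}\in S$, $n_{k+1}>n_k$, and $u_{n_{k+1}}\geq u_{n_k}$, extending the chain.

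If instead $\N\setminus S\in\mathcal{U}$, then for each $n\in\N\setminus S$ one has $\{m\colon u_n\leq u_m\}\notin\mathcal{U}$, so the complementary set $\{m\colon u_m<u_n\}$ lies in $\mathcal{U}$. A parallel induction inside $\N\setminus S$ now produces a strictly decreasing subsequence: at stage $k+1$ we intersect $\N\setminus S$, $\{m\colon u_m<u_{n_k}\}$, and the cofinite set $\{m>n_k\}$, and take $n_{k+1}$ from this nonempty intersection.

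The conceptual heart of the argument is the first paragraph: viewing the sequence simultaneously as a single element of $\astf$ collapses the combinatorial task of locating infinitely many comparable terms into one ultrafilter decision. What might look like the main obstacle, namely the inductive selection of $n_{k+1}$ subject to three separate constraints, becomes automatic because closure of $\mathcal{U}$ under finite intersection, together with its freeness, supplies exactly these memberships at once. Note that only the order relation is used, consistent with the remark that the argument extends verbatim to an arbitrary totally ordered set $F$.
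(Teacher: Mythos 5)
Your proof is correct, and its core is the same as the paper's: plug the sequence into the ultrapower, let the position of the terms relative to the class $u=[\langle u_n\rangle]$ (a decision made by $\mathcal{U}$) determine the direction of monotonicity, and then extract the subsequence inductively. Two points of execution differ. First, the paper splits $\N$ three ways according to $u_n<u$, $u_n=u$, $u_n>u$, obtaining the sharper trichotomy constant/strictly increasing/strictly decreasing of Theorem~\ref{t32}, whereas your two-way split on $u_n\leq U$ versus $u_n>U$ merges the first two cases and delivers exactly the weak statement asked for --- a harmless economy here, though you forgo the refined version. Second, and more interestingly, your inductive step is cleaner: where the paper picks the earliest later term closer to $u$ and then argues by contradiction that the process cannot terminate, you observe directly that the admissible next indices form the intersection of three members of $\mathcal{U}$ (the case set, the set $\{m\in\N\colon u_m\geq u_{n_k}\}$ whose membership in $\mathcal{U}$ is exactly what $n_k\in S$ encodes, and a cofinite tail), hence a set in $\mathcal{U}$ and in particular nonempty. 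This replaces the termination argument by the finite-intersection property of the ultrafilter and makes explicit where freeness of $\mathcal{U}$ is used; it is arguably the more transparent way to run the induction, and like the paper's argument it uses only the order on $F$.
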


This is an immediate consequence of the following more detailed
result.

\begin{theorem}
\label{t32}
Let~$u\in\astf=F^\N\!/\mathcal{U}$ be the element obtained as the
equivalence class of the sequence~$\langle u_n\rangle$.  Consider the
partition $\N=A\sqcup B\sqcup C$ where~$A=\{n\in\N\colon
u_n<u\}$,~$B=\{n\in\N\colon u_n=u\}$,~$C=\{n\in\N\colon u_n >u\}$.
Then exactly one of the following three possibilities occurs:
\begin{enumerate}
\item
$B\in\mathcal{U}$ and then~$\langle u_n\rangle$ contains an infinite constant
subsequence;
\item
$A\in\mathcal{U}$ and then~$\langle u_n\rangle$ contains an infinite
strictly increasing sub\-sequence;
\item
$C\in\mathcal{U}$ and then~$\langle u_n\rangle$ contains an infinite strictly
decreasing subsequence.
\end{enumerate}
\end{theorem}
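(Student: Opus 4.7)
The plan is to first observe that exactly one of the three sets $A, B, C$ lies in $\mathcal{U}$. This is a standard consequence of two properties of the ultrafilter: for any $X\subseteq\N$, either $X\in\mathcal{U}$ or $\N\setminus X\in\mathcal{U}$; and $\mathcal{U}$ is closed under finite intersection while excluding $\emptyset$. Applying the dichotomy first to $A$ and then (if $A\notin\mathcal{U}$) to $B$ shows at least one of the three sets is in $\mathcal{U}$, while pairwise disjointness rules out that two can be in $\mathcal{U}$ simultaneously. The three cases are then handled separately.

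In case (1), with $B\in\mathcal{U}$, freeness of $\mathcal{U}$ forces $B$ to be infinite. Moreover, for every $n\in B$ the element $u_n$, viewed as the class of a constant sequence, coincides with $u\in\astf$; hence all such $u_n$ are equal to each other in $F$. Indexing along $B$ in increasing order produces the required infinite constant subsequence.

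For case (2), where $A\in\mathcal{U}$, I would construct a strictly increasing subsequence by recursion. The key observation is that for every $n\in A$, the description of the order on $\astf$ combined with $u_n<u$ yields
\[
\{m\in\N\colon u_n<u_m\}\in\mathcal{U}.
\]
Given $n_1<n_2<\cdots<n_k$ already selected from $A$ with $u_{n_1}<\cdots<u_{n_k}$, the pool of admissible next indices
\[
S_k \;=\; A \,\cap\, \{m\in\N\colon m>n_k\} \,\cap\, \{m\in\N\colon u_{n_k}<u_m\}
\]
is a finite intersection of three members of $\mathcal{U}$, hence lies in $\mathcal{U}$ and in particular is nonempty; choose any $n_{k+1}\in S_k$. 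Case (3) is entirely symmetric, with $C$ in place of $A$ and all inequalities reversed.

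I do not anticipate any genuine obstacle. The only substantive content is the recursive construction in cases (2) and (3), and its success rests solely on the fact that membership in $\mathcal{U}$ is preserved by finite intersection; this is precisely what guarantees that the candidate set $S_k$ remains nonempty at every stage, so the greedy extraction never terminates.
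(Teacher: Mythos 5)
Your proposal is correct and follows essentially the same route as the paper: reduce to the ultrafilter trichotomy on $A$, $B$, $C$, handle the constant case via the infinitude of $B$, and build the monotone subsequence recursively using the fact that $u_{n_k}<u$ forces $\{m\in\N\colon u_{n_k}<u_m\}\in\mathcal{U}$. The only difference is cosmetic: you show directly that the pool $S_k$ of admissible next indices lies in $\mathcal{U}$ and is therefore nonempty, whereas the paper phrases the same point as a proof by contradiction (if the recursion stalled at $u_p$ one would get $u\leq u_p$, contradicting $p\in A$); your positive formulation is, if anything, slightly cleaner.
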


\begin{proof}
By the property of an ultrafilter, exactly one of the sets~$A,B,C$ is
in~$\mathcal{U}$.  If~$B\in\mathcal{U}$ then~$u$ is an element of the
subfield~$F\subseteq\astf$ (embedded via constant sequences).
Since~$B\subseteq\N$ is necessarily infinite, enumerating it we obtain
the desired subsequence.

Now assume~$A\in\mathcal{U}$.  We choose any element~$u_{n_1}\in A$ to
be the first term in the subsequence.  We then inductively choose the
index~$n_{k+1}>n_k$ in~$A$ so that~$u_{n_{k+1}}$ is the earliest term
greater than~$u_{n_k}$ and therefore closer to~$u$ than the previous
term~$u_{n_{k}}$.  If the subsequence were to terminate at, say,
$u_p$, this would imply that~$\{n\in\N\colon u_n\leq u_p\}\in\mathcal
U$ and therefore~$u\leq u_p$, contradicting the definition of the
set~$A$.  Therefore we necessarily obtain an infinite increasing
subsequence.

The case~$C\in\mathcal{U}$ is similar and results in a decreasing
sequence.
\end{proof}

\begin{remark}
\label{r33}
The proof is essentially a two-step procedure: (1)~we plug the
sequence into the ultrapower construction, producing an
element~$u\in\astf$; (2)~in each of the cases specified by the element
$u$, we inductively find a monotone subsequence.
\end{remark}

The approach exploiting~$\astf$ has the advantage that the proof does
not require constructing a completion of the field in the case~$F=\Q$.
To work with the ultrapower, one needs neither advanced logic nor a
crash course in NSA, since the ultrapower construction involves merely
quotienting by a maximal ideal as is done in any serious undergraduate
algebra course (see Section~\ref{s3}).

A monotone sequence can also be chosen by the following more
traditional consideration.  If the sequence is unbounded, one can
choose a sequence that diverges to infinity.  If the sequence is
bounded, one applies the Bolzano-Weierstrass theorem (\emph{each
bounded sequence has a convergent subsequence}) to extract a
convergent subsequence.  Finally, a convergent sequence contains a
monotone one by analyzing the terms lying on one side of the limit
(whichever side has infinitely many terms).

The proof via an ultrapower allows one to bypass the issue of
convergence.  Once one produces a monotone subsequence, it will also
be convergent in the bounded case but only when the field is complete.
Furthermore one avoids the use of the Bolzano--Weierstrass theorem.

Since in the case of~$F=\Q$ the Bolzano--Weierstrass theorem is
inapplicable, one would need first to complete~$\Q$ to~$\R$ by an
analytic procedure which is arguably at least as complex as the
algebraic construction involved in the ultrapower of Section~\ref{s3}.

There is a clever proof of the same result, as follows (see e.g.,
problem\;6 on page\;4 in Newman \cite{Ne82}).  Call a term in the
sequence a \emph{peak} if it is larger than everything which comes
after it.  If there are infinitely many peaks, they form an infinite
decreasing subsequence.  If there are finitely many peaks, start after
the last one.  From here on every term has a larger term after it, so
one inductively forms an increasing subsequence (from this lemma one
derives a simple proof of the Bolzano--Weierstrass theorem).

\begin{remark}
The proof in Newman consists of two steps: (1) introduce the idea of a
peak; (2) consider separately the cases when the number of peaks is
finite or infinite to produce the desired monotone subsequence.  While
the basic structure of the proof is similar to that using the
ultrapower (see Remark~\ref{r33}), the basic difference is that
step~(1) in Newman is essentially ad-hoc, is tailor-made for this
particular problem, and is not applicable to solving other problems.
Meanwhile the ultrapower construction is applicable in many other
situations (see e.g., Section~\ref{s4}).
\end{remark}

While the proof in Newman does not rely on an ultrapower, the idea of
the ultrapower proof is more straightforward once one is familiar with
the ultrapower construction, since it is natural to plug a sequence
into it and examine the consequences.

We provide another illustration of how the element~$u=[\langle
u_n\rangle]$ can serve as an organizing principle that allows us to
detect properties of monotone subsequences.  To fix ideas let~$F=\R$.
An element~$u\in\astr$ is called finite if~$-r<u<r$ for a
suitable~$r\in\R$.  Let~$\hr\subseteq\astr$ be the subring of finite
elements of~$\astr$.  The standard part function~$\st\colon\hr\to\R$
rounds off each finite hyperreal~$u$ to its nearest real number
$u_0=\st(u)$.

\begin{proposition}
If~$u\in\hr$ and~$u>u_0$ then the sequence~$\langle u_n\rangle$
possesses a strictly decreasing subsequence.
\end{proposition}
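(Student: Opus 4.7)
The plan is to apply Theorem~\ref{t32} to the sequence $\langle u_n\rangle$ and rule out possibilities (1) and (2) of its trichotomy, leaving only (3), which already produces the desired strictly decreasing subsequence.

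The crucial feature of the hypothesis is that $u - u_0$ is a \emph{positive} infinitesimal: infinitesimal because $u_0 = \st(u)$, positive because $u > u_0$. The first concrete step of the proof is to record the following order-theoretic observation: no real number lies strictly between $u_0$ and $u$ in $\astr$. Indeed, any real $r > u_0$ satisfies $r - u_0 > 0$ in $\R$, and a positive real exceeds every positive infinitesimal; hence $r - u_0 > u - u_0$, i.e.\ $r > u$.

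Next I would invoke Theorem~\ref{t32}. Case (1), $B\in\mathcal{U}$, would identify $u$ with an element of $F=\R$ via the constant embedding, so $u=u_0$, contradicting $u>u_0$. For case (2), $A\in\mathcal{U}$, I would use the very definition of the order on $\astf$ applied to the constant element $u_0$: from $u_0<u$ one reads off that $S:=\{n\in\N\colon u_n>u_0\}\in\mathcal{U}$. But then $A\cap S\in\mathcal{U}$ and is in particular nonempty; any $n\in A\cap S$ produces a real $u_n$ with $u_0<u_n<u$, contradicting the observation of the previous paragraph. Therefore $C\in\mathcal{U}$, and the strictly decreasing subsequence is produced by the inductive construction already supplied in the proof of Theorem~\ref{t32}(3).

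I do not anticipate a serious obstacle here: the one load-bearing step is the observation that no real number can sit strictly between a finite hyperreal $u$ and its standard part $u_0$, and this is immediate from the very definition of an infinitesimal. Once it is in hand, cases (1) and (2) of the trichotomy are eliminated mechanically, and case (3) delivers the conclusion with no additional work.
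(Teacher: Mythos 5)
Your proof is correct. The paper takes a slightly different route: instead of eliminating cases of the trichotomy, it observes directly that $u>u_0$ forces $\{n\in\N\colon u_n>u_0\}\in\mathcal{U}$, picks $n_1$ in this set, and reruns the inductive construction of Theorem~\ref{t32}, choosing each successive term closer to $u$ than the last and noting as before that the process cannot terminate. Both arguments rest on the same load-bearing fact you isolate --- a real number exceeding $u_0$ must exceed $u$, since a positive real dominates the positive infinitesimal $u-u_0$ --- but the paper leaves it implicit (it is exactly what guarantees that the selected terms sit above $u$, so that ``closer to $u$'' means ``smaller''). Your version makes that step explicit and reduces the proposition cleanly to the statement $C\in\mathcal{U}$, reusing case (3) of Theorem~\ref{t32} as a black box rather than repeating its induction; the paper's version is a direct construction that parallels, rather than invokes, the earlier proof. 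The two are logically equivalent, and arguably yours is the more transparent write-up.
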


\begin{proof}
Since~$u>u_0$ we have~$\{n\in\N\colon u_n>u_0\}\in\mathcal{U}$.  We
start with an arbitrary $n_1\in\{n\in\N\colon u_n>u_0\}$ and
inductively choose $n_{k+1}$ so that $u_{n_{k+1}}$ is closer to $u$
than $u_{n_k}$.  We argue as in the proof of Theorem~\ref{t32} to show
that the process cannot terminate and therefore produces an infinite
subsequence.
\end{proof}

\section{Compactness}
\label{s4}

A more advanced application is a proof of the nested decreasing
sequence property for compact sets (Cantor's intersection theorem)
using the property of \emph{saturation}.  Such a proof exbibits
compactness as closely related to the more general property of
saturation, shedding new light on the classic property of compactness.

A typical proof of \emph{Cantor's intersection theorem} for a nested
decreasing sequence of compact subsets $A_n\subseteq\R$ would use the
monotone sequence $\langle u_n\rangle$ where $u_n$ is the minimum of
each $A_n$.  We will present a different and more conceptual proof.

Each set~$A\subseteq\R$ has a \emph{natural extension} denoted
$\asta\subseteq\astr$.  Similarly the powerset $P=\mathcal P(\R)$ has
a natural extension~$\astp$ identified with a proper subset of
$\mathcal P(\astr)$.  Each element of~$\astp$ is naturally identified
with a subset of~$\astr$ called an \emph{internal set}.

The principle of \emph{saturation} holds for arbitrary nested
decreasing sequences of internal sets but we will present it in a
following special case.

\begin{theorem}[Saturation]
\label{t971}
If~$\langle A_n \colon n\in\N\rangle$ is a nested decreasing sequence
of nonempty subsets of\,~$\R$ then the sequence~$\langle\asta_n\colon
n\in\N\rangle$ has a common point.
\end{theorem}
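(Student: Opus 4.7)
The plan is to use the ultrapower construction from Section~\ref{s3} together with a single application of countable choice. Since each $A_n$ is nonempty, I would pick some $a_n \in A_n$ for every $n \in \N$, thereby producing a sequence $\langle a_n \rangle$ of real numbers. Passing to the ultrapower, I would form the hyperreal $a = [\langle a_n \rangle] \in \astr$, and claim that this single element $a$ is a common point of all $\asta_n$.

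To verify the claim, I would use the standard description of the natural extension via the ultrapower: namely, for $A \subseteq \R$ the set $\asta$ consists of those classes $[\langle x_k \rangle]$ such that $\{k \in \N \colon x_k \in A\} \in \mathcal{U}$. Fix any $n \in \N$. Because the original sequence is nested decreasing, we have $a_k \in A_k \subseteq A_n$ for every $k \geq n$. Thus the set $\{k \in \N \colon a_k \in A_n\}$ contains the cofinite set $\{k \colon k \geq n\}$, and so lies in $\mathcal{U}$ by the defining property of a free ultrafilter. Consequently $a \in \asta_n$, and since $n$ was arbitrary, $a \in \bigcap_n \asta_n$.

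The main conceptual point, and the place I would spend the most care, is the description of $\asta$ as the set of classes of sequences whose terms lie in $A$ on a set of indices of full measure $\xi$. The excerpt only states that $\asta \subseteq \astr$ is the natural extension and that internal sets correspond to elements of $\astp$, so I would briefly remind the reader of this characterization (it is the direct analog of the definition of the order on $\astf$). Once this identification is in hand, the proof is entirely routine: the nesting hypothesis delivers cofinitely many indices in the required set, and cofinite sets always belong to a free ultrafilter. No completeness, compactness, or Bolzano--Weierstrass argument is needed, which illustrates precisely the point of Section~\ref{s4}: Cantor's intersection theorem should be viewed as a shadow of the more general saturation property of the ultrapower.
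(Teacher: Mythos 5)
Your proof is correct, but it takes a different route from the paper's. You work entirely inside the concrete ultrapower model: choose $a_n\in A_n$, form $a=[\langle a_n\rangle]\in\astr$, and verify $a\in\asta_n$ for every $n$ using the characterization $\asta=\{[\langle x_k\rangle]\colon \{k\colon x_k\in A\}\in\mathcal U\}$ together with the fact that $\mathcal U$ contains all cofinite sets and is closed under supersets. All of that is sound (and your flagged step, the description of the natural extension in the ultrapower, is exactly the right thing to check for well-definedness). The paper instead argues at the level of the nonstandard framework: it extends the map $n\mapsto A_n$ to a function $\astn\to\astp$, uses transfer to see that the extended sequence $\langle B_n\rangle$ is still nested with nonempty terms, evaluates at an infinite index $H$, and picks $c\in B_H\subseteq\asta_n$ for all finite $n$. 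If you unwind the paper's proof in the ultrapower with $H=[\langle k\rangle]$, the element $c$ is precisely a class $[\langle c_k\rangle]$ with $c_k\in A_k$ on a set in $\mathcal U$, so the two arguments coincide at bottom; both also invoke countable choice at the same point (you to pick the $a_n$, the paper to pick $c\in B_H$). What your version buys is self-containedness: it needs only the Section~\ref{s3} construction, no transfer principle, no internal sets, no extension of $\mathcal P(\R)$. What the paper's version buys is generality and continuity with the rest of Section~\ref{s4}: stated via transfer and internal sets, the argument is the special case of the full saturation principle for arbitrary nested sequences of \emph{internal} sets (where no standard choice sequence $\langle a_n\rangle$ is available), and it introduces the machinery (natural extensions of functions, infinite indices, internal sets) reused in Theorem~\ref{s43} and Cantor's intersection theorem.
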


\begin{proof}
Let~$\mathbb {P}=\mathcal{P}(\R)$ be the set of subsets of~$\R$.  We
view the sequence~$\langle A_n\in\mathbb{P}\colon n\in\N\rangle$ as a
function~$f\colon\N\to\mathbb P,\; n\mapsto A_n$.  By the extension
principle we have a function~$\astff \colon \astn\to \astp$.
Let~$B_n=\astff(n)$.  For each finite~$n$ we
have~$B_n=\asta_n\in\astp$.  For each infinite value of the
index~$n=H$ the entity~$B_H\in\astp$ is by definition internal but is
not (necessarily) the natural extension of any subset of~$\R$.

If~$\langle A_n \rangle$ is a \emph{nested} sequence in~$\mathbb P$
then by transfer~$\langle B_n\colon n\in\astn \rangle$ is a nested
sequence in~$\astp$ with each $B_n$ nonempty.  Let~$H$ be a fixed
infinite index.  Then for each finite~$n$ the
set~$\asta_n\subseteq\astr$ includes~$B_H$.  Choose any element~$c\in
B_H$.  Then~$c$ is contained in~$\asta_n$ for each finite~$n$ so that
$c\in \bigcap_{n\in\N} \asta_n$ as required.
\end{proof}

\begin{remark}
\label{r1013}
An equivalent formulation of Theorem~\ref{t971} is as follows.  If the
family of subsets~$\{A_n\}_{n\in\N}$ has the finite intersection
property then~$\exists c\in\bigcap_{n\in\N} \asta_n$.
\end{remark}

Let~$X$ be a topological space.  Let~$p \in X$.  The \emph{halo}
of~$p$, denoted~$\ha(p)$ is the intersection of all~$\astu$ where~$U$
runs over all neighborhoods of~$p$ in~$X$ (a neighborhood of~$p$ is an
open set that contains~$p$).  A point~$y \in \astx$ is called
\emph{nearstandard} in~$X$ if there is~$p \in X$ such
that~$y\in\ha(p)$.

\begin{theorem}
\label{s43}
A space~$X$ is compact if and only if every~$y\in\astx$ is
nearstandard in~$X$.
\end{theorem}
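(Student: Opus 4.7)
My plan is to prove the biconditional by separating the two implications: the forward direction reduces to transfer of a finite subcover, while the reverse direction is a saturation argument producing a non-nearstandard point from a hypothetical open cover without a finite subcover.

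For $(\Rightarrow)$ I would assume $X$ is compact, fix $y \in \astx$, and argue by contradiction. If $y$ is not nearstandard then for every $p \in X$ there is an open neighborhood $U_p$ of $p$ with $y \notin {}^\ast U_p$. The family $\{U_p : p \in X\}$ is then an open cover of $X$, so compactness yields a finite subcover $U_{p_1}, \ldots, U_{p_n}$. Transferring the equality $X = \bigcup_{i=1}^n U_{p_i}$ gives $\astx = \bigcup_{i=1}^n {}^\ast U_{p_i}$, so $y$ lies in some ${}^\ast U_{p_i}$, contradicting the choice of $U_{p_i}$.

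For $(\Leftarrow)$ I would assume every $y \in \astx$ is nearstandard and suppose for contradiction that $X$ is not compact. Then some open cover $\{U_\alpha\}_{\alpha \in I}$ has no finite subcover, so the closed complements $F_\alpha = X \setminus U_\alpha$ satisfy the finite intersection property. Invoking the FIP form of saturation (Remark~\ref{r1013}), I obtain a point $y \in \bigcap_{\alpha \in I} {}^\ast F_\alpha \subseteq \astx$. By hypothesis $y \in \ha(p)$ for some $p \in X$; pick $\alpha_0$ with $p \in U_{\alpha_0}$. Then on the one hand $y \in \ha(p) \subseteq {}^\ast U_{\alpha_0}$ (every neighborhood of $p$ contributes to the halo), while on the other hand $y \in {}^\ast F_{\alpha_0} = \astx \setminus {}^\ast U_{\alpha_0}$, the required contradiction.

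The main obstacle is exactly this saturation step in the reverse direction. Theorem~\ref{t971} and Remark~\ref{r1013} are stated only for countable families, whereas a general open cover may have arbitrary cardinality~$|I|$. To make the argument go through for an arbitrary topological space~$X$ one must strengthen the construction of Section~\ref{s3} to an enlargement with sufficient saturation (at level~$|I|^+$, say), a standard upgrade of the countable ultrapower that adds no new conceptual content. Once that saturation is available, the rest of~$(\Leftarrow)$ is a routine unwinding of the definitions of halo, open cover, and natural extension.
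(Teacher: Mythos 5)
Your proof is correct and follows essentially the same route as the paper's: the forward direction by extracting a finite subcover and using that the star of a finite union is the union of the stars, and the reverse direction by applying saturation to the family of starred closed complements, which have the finite intersection property. Your closing observation --- that Theorem~\ref{t971} and Remark~\ref{r1013} are stated only for countable families, so an open cover of arbitrary cardinality~$|I|$ requires a $|I|^+$-saturated extension rather than the plain countable ultrapower of Section~\ref{s3} --- identifies a real subtlety that the paper's own proof passes over silently.
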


\begin{proof}
To prove the direction~$\Rightarrow$, assume~$X$ is compact, and let
$y\in\astx$.  Let us show that~$y$ is nearstandard (this direction
does not require saturation).  Assume on the contrary that~$y$ is not
nearstandard.  This means that it is not in the halo of any
point~$p\in X$.  This means that every~$p \in X$ has a
neighborhood~$U_p$ such that~$y\not\in\astu_p$. The
collection~$\{U_p\}_{p\in X}$ is an open cover of~$X$.  Since~$X$ is
compact, the collection has a finite
subcover~$U_{p_1},\ldots,U_{p_n}$, so that~$X=U_{p_1}\cup \ldots \cup
U_{p_n}$. But for a finite union, the star of union is the union of
stars.  Thus~$\astx$ is the union of~$\astu_{p_1},\ldots,\astu_{p_n}$,
and so the point~$y$ is in one of the
sets~$\astu_{p_1},\ldots,\astu_{p_n}$, a contradiction.

Next we prove the direction~$\Leftarrow$ (this direction exploits
saturation).  Assume every~$y \in \astx$ is nearstandard, and
let~$\{U_a\}$ be an open cover of~$X$.  We need to find a finite
subcover.

Assume on the contrary that the union of any finite collection
of~$U_a$ is not all of~$X$.  Then the complements of~$U_a$ are a
collection of (closed) sets~$\{S_a\}$ with the finite intersection
property.  It follows that the collection~$\{\asts_a\}$ similarly has
the finite intersection property.  By saturation (see
Remark~\ref{r1013}), the intersection of all~$\asts_a$ is non-empty.
Let~$y$ be a point in this intersection.  Let~$p \in X$ be such
that~$y \in \ha(p)$.  Now~$\{U_a\}$ is a cover of~$X$ so there is
a~$U_b$ such that~$p\in U_b$.  But~$y$ is in~$\asts_a$ for all~$a$, in
particular~$y \in\asts_b$, so it is not in~$\astu_b$, a contradiction
to~$y \in\ha(p)$.
\end{proof}


\begin{theorem}[Cantor's intersection theorem]
A nested decreasing sequence of nonempty compact sets has a common
point.
\end{theorem}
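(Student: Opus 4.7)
The plan is to combine the two main results of this section, namely the saturation principle (Theorem~\ref{t971}) and the nonstandard characterization of compactness (Theorem~\ref{s43}). Saturation will produce a common hyperreal point $c\in\bigcap_n\asta_n$, and compactness will then let us round it off to an honest real point lying in $\bigcap_n A_n$.

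First I would apply saturation: the hypothesis gives a nested decreasing sequence $\langle A_n\rangle$ of nonempty (compact) subsets of~$\R$, so Theorem~\ref{t971} produces a point $c\in\bigcap_{n\in\N}\asta_n\subseteq\astr$. Next I would extract a standard point from $c$ by using compactness. Applying Theorem~\ref{s43} to the compact space $X=A_1$ (with its subspace topology from~$\R$), the fact that $c\in\asta_1=\astx$ forces $c$ to be nearstandard, so there exists $p\in A_1$ with $c\in\ha(p)$, where the halo is computed with respect to the topology of~$A_1$.

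It remains to verify that $p\in A_n$ for every~$n$. Each $A_n$ is a compact subset of~$\R$, hence closed, so $A_1\setminus A_n$ is open in~$A_1$. If $p$ failed to lie in some~$A_n$, then $A_1\setminus A_n$ would be a neighborhood of~$p$ in~$A_1$, and the definition of the halo would force $c\in{}^{\ast}(A_1\setminus A_n)=\asta_1\setminus\asta_n$, contradicting $c\in\asta_n$. Hence $p\in\bigcap_{n\in\N}A_n$, as required. The main obstacle I anticipate is only bookkeeping: one must keep track that the halo of Theorem~\ref{s43}, applied in the subspace~$A_1$, interacts correctly with the natural extensions of the smaller sets~$A_n$, which in the end reduces to the transfer of the set-theoretic identity ${}^{\ast}(A_1\setminus A_n)=\asta_1\setminus\asta_n$.
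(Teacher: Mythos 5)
Your proposal is correct and follows essentially the same route as the paper: saturation (Theorem~\ref{t971}) produces a common point of the sets $\asta_n$, and the nonstandard characterization of compactness (Theorem~\ref{s43}) converts it into a standard common point. The only cosmetic difference is that the paper applies nearstandardness in each $K_n$ and takes $\st(x)$ directly, while you apply it once in $A_1$ and then use closedness of the $A_n$ to place the standard point $p$ in every $A_n$; both are valid.
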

 
\begin{proof}
Given a nested sequence of compact sets $K_n$, we consider the
corresponding decreasing nested sequence of internal sets,
$\langle{}^\ast\!K_n\colon n\in\mathbb N\rangle$.  This sequence has a
common point $x$ by saturation.  But for a compact set $K_n$, every
point of ${}^\ast\!K_n$ is nearstandard (i.e., infinitely close to a
point of $K_n$) by Theorem~\ref{s43}.  In particular, $\st(x)\in K_n$
for all $n$, as required.
\end{proof}

More advanced applications can be found in \cite{17f, 18h, NK}.

\end{document}